\numberwithin{equation}{section}
\theoremstyle{plain}
\newtheorem{thm}{Theorem}[section]
\newtheorem{cor}[thm]{Corollary}
\newtheorem{theorem*}{Theorem}[]
\theoremstyle{definition}
\theoremstyle{remark}
\newtheorem{rem}[thm]{Remark}
\newcommand{\R}{\mathbb{R}}
\newcommand{\Z}{\mathbb{Z}}
\DeclareMathOperator{\Ker}{Ker}
\DeclareMathOperator{\Coker}{Coker}
\DeclareMathOperator{\Image}{Im}
\begin{document}

\date{May 16, 2019}

\title[A local duality obstruction]{Real Intersection Homology II:\\ A local duality obstruction}

\author[C. McCrory and A. Parusi\'nski]{{Clint McCrory} \and {Adam Parusi\'nski}}
\thanks{Adam Parusi\'nski partially supported by ANR project LISA (ANR-17-CE40-0023-03)}

\address {Mathematics Department, University of Georgia, Athens GA
30602, USA}

\email{clint@math.uga.edu}

\address {Universit\'e Nice Sophia Antipolis, CNRS,  LJAD, UMR 7351, 06108 Nice, France}

\email{adam.parusinski@unice.fr}

\maketitle


We prove that the intersection pairing on the real intersection homology  of a real algebraic variety is not a dual pairing in general. A classical argument of Thom for manifolds, adapted to real intersection homology, shows that if this intersection pairing is nonsingular and $X$ is the link of a point in a real algebraic variety, with the dimension of $X$ even, then the intersection homology euler characteristic $I\chi(X)$ is even. Using an existence theorem of Akbulut and King \cite{AK} we show there is a singular algebraic surface $X$ that is the link of a point in a 3-dimensional real algebraic variety, such that $I\chi(X)$ is odd. Thus our definition of real intersection homology \cite{rih} does not have the key self-duality property suggested by Goresky and MacPherson (\cite{GM}, p.227), though it does enjoy their small resolution property.


\section {Real intersection homology}

We summarize the results of our preceding paper \cite{rih}. Let $X$ be a compact real algebraic variety of dimension $n$. The real intersection homology groups $IH_i(X)$, $i = 0,\dots, n$, are $\Z/2$ vector spaces invariant under arc-symmetric semialgebraic homeomorphisms. There are natural homomorphisms
$$
H^{n-i}(X;\Z/2)\to IH_i(X)\to H_i(X;\Z/2),\ i=0,\dots,n,
$$
and the composition is the classical Poincar\'e duality homomorphism. If $X$ is nonsingular and connected then these two maps are isomorphisms. If $\pi:\widetilde X\to X$ is a small resolution, then $\pi$ induces isomorphisms $H_i(\widetilde X)\cong IH_i(X)$ for all $i$. 

If $X$ is not compact, there are two versions of the real intersection homology of $X$, $IH^c_i(X)$ with compact supports and $IH^{cl}_i(X)$ with closed supports. The preceding results hold for compact or closed supports.

For all $i$ and $j$ with $i+j=n$ there is a bilinear intersection pairing
$$
\psi: IH^c_i(X)\times IH^{cl}_j(X) \to \Z/2,
$$
defined using the general position theorem of \cite{MPP}. If $X$ has isolated singularities then this pairing is nonsingular (\emph{i.e.} it is a dual pairing). In other words, the homomorphism $\Psi: IH^c_i(X) \to (IH^{cl}_j(X))^*$, $\Psi(\alpha)(\beta) = \psi(\alpha,\beta)$, is an isomorphism.


\section{The obstruction}

Real intersection homology is defined for a semialgebraic open subset of a real algebraic variety, such as the star of a point in a real algebraic variety. We prove that if the intersection pairing is nonsingular for all real algebraic varieties, then the intersection pairing is also nonsingular on all such stars.
And if the intersection pairing on the star of $v_0\in V$ is nonsingular, then $I_\chi(X)$ is even, where $X$ is the link of $v_0\in V$. 

First we present Thom's proof of the following classical result (\cite{T}, Th\' eor\` eme V.9, p.175), and then we discuss how to adapt his proof to our setting.

\begin{thm}\label{thom} Let $M$ be a compact even dimensional manifold without boundary. If $M$ is the boundary of a compact manifold $W$, then the euler characteristic $\chi(M)$ is even.\end{thm}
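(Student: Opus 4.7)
The plan is to use the doubling construction. Given $W$ with $\partial W = M$, I would form the \emph{double} $DW = W \cup_M W$, obtained by gluing two copies of $W$ along their common boundary $M$. Using a collar neighborhood of $M$ in $W$, this $DW$ inherits the structure of a closed (i.e. compact, boundaryless) manifold of dimension $\dim W = \dim M + 1$, which is odd since $\dim M$ is even.

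Next I would compute $\chi(DW)$ in two different ways. On one hand, writing $DW$ as the union of two open sets, each homotopy equivalent to a copy of $W$ and with intersection homotopy equivalent to $M$ (using the collar), the Mayer--Vietoris sequence yields the inclusion--exclusion identity
\begin{equation*}
\chi(DW) \;=\; 2\,\chi(W) - \chi(M).
\end{equation*}
On the other hand, since $DW$ is a closed manifold of odd dimension, Poincar\'e duality with $\Z/2$ coefficients gives $b_i(DW;\Z/2) = b_{\dim DW - i}(DW;\Z/2)$ for all $i$, and pairing terms $i \leftrightarrow \dim DW - i$ of opposite parity one finds $\chi(DW) = 0$ (or at least $\chi(DW) \equiv 0 \pmod 2$, which is all we need).

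Combining the two computations yields $\chi(M) = 2\,\chi(W)$, so $\chi(M)$ is even, as desired.

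The main technical point, and the only genuine obstacle, is verifying that the double $DW$ is actually a manifold; this is the standard collar-neighborhood argument and requires no substantial work in the smooth or topological category. Everything else is elementary: Mayer--Vietoris additivity of $\chi$ and the vanishing of $\chi \pmod 2$ for closed odd-dimensional manifolds via Poincar\'e duality. The reason this proof is worth examining carefully is that adapting it to the intersection-homology setting will require replacing $\chi$ by $I\chi$, replacing Poincar\'e duality by the hypothesized nonsingularity of the intersection pairing on the star, and replacing the collar/doubling construction by a suitable semialgebraic analogue using the link of $v_0 \in V$.
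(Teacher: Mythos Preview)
Your doubling argument is correct and is a standard proof of Thom's theorem, but it is genuinely different from the route taken in the paper. The paper first uses Poincar\'e duality on $M$ itself to reduce the claim to showing that the middle Betti number $b_k$ (where $n=2k$) is even, and then analyzes the long exact sequence of the pair $(W,M)$ together with the Poincar\'e--Lefschetz duality isomorphism between this sequence and its dual. From the commutative ladder one gets $\Ker\alpha \cong \Image\alpha^*$ for $\alpha:H_k(M)\to H_k(W)$, while adjointness gives $\Ker\alpha \cong \Coker\alpha^*$; combining these yields $b_k = 2\dim(\Ker\alpha)$.

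Your approach is shorter for the classical statement, but the paper's approach is tailored to the subsequent generalization. In the intersection-homology setting the available hypotheses are precisely that the intersection pairings on the star $N$ and on the link $X$ are nonsingular, which are the exact analogues of duality on $W$ and on $M$ separately. The paper's proof uses only those two ingredients plus the long exact sequence of the pair, so it transports directly (this is Theorem~\ref{les} and its corollary). Your doubling argument, by contrast, would require nonsingularity of the pairing on a \emph{doubled} object $DN = N\cup_X N$; that object is not a real algebraic variety in any evident way, and its duality is not among the hypotheses, so the adaptation you sketch in your final paragraph would need an extra step that the paper's argument avoids.
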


\begin{proof}
Let $H_*(M)$ be the singular homology of $M$ with $\Z/2$ coefficients. Let $n = 2k = \dim M$, and let $b_i = \dim H_i(M)$, the $i$th betti number of $M$, $i = 0,\dots,n$. By Poincar\' e duality, $b_i = b_{n-i}$ for all $i$. Thus
\begin{equation}
\chi(M) = \sum_i (-1)^i b_i = 2\sum _{i<k} (-1)^ib_i + (-1)^k b_k,
\end{equation}
so $\chi(M)$ is even if and only if $b_k$ is even.

To see that $b_k$ is even, consider the long exact homology sequence of the pair $(W,M)$,
\begin{equation}
\cdots \to H_{i+1}(W,M) \overset{\Delta}{\to} H_i(M) \overset{\alpha}{\to} H_i(W) \overset{\beta}{\to} H_i(W,M) \to \cdots ,
\end{equation}
and the corresponding exact sequence of dual vector spaces,
\begin{equation}
 \cdots \to H_{j}(W,M)^* \overset{\beta^*}{\to} H_{j}(W)^* \overset{\alpha^*}{\to} H_{j}(M)^* \overset{\Delta^*}{\to} H_{j+1}(W,M)^* \to \cdots .
\end{equation}
By Poincar\' e duality the intersection pairings on $M$ and $W$ induce an isomorphism of these two exact sequences. In particular we have the following commutative diagram, where the vertical isomorphisms are given by the intersection pairings:

\begin{equation}\label{ker-im}
\begin{CD}
H_{k+1}(W,M) @>\Delta>> H_k(M) @>\alpha>>  H_k(W) \\
 @VV\cong V @VV\cong V  @VV\cong V \\
H_{k}(W)^* @>\alpha^*>> H_k(M)^* @>\Delta^*>> H_{k+1}(W,M)^* 
\end{CD}
\end{equation}
\vskip.2in

\noindent By exactness of the rows we have $\Ker \alpha \cong \Image \alpha^*$.

Now since $\alpha$ and $\alpha^*$ are adjoint, the kernel of $\alpha$ is the annihilator of the image of $\alpha^*$, and $\Ker \alpha \cong \Coker \alpha^*$. Thus $\dim(\Ker\alpha) + \dim(\Image\alpha^*) = \dim(H_k(M))$, so $b_k = 2\dim (\Ker\alpha)$.
\end{proof}

\begin{rem} It's easy to see that if $a, b\in \Ker\alpha$ then the intersection $a\cdot b = 0$. (Let $a=[A]$, with $A=\partial A'$ and $b=[B] $ with $B=\partial B'$. If $A'$ and $B'$ are transverse, then $A \cdot B = \partial (A'\cdot B')$.) In other words, $\Ker \alpha\subset P$, where $P$ is the annihilator of $\Ker \alpha$ under the intersection product. Now $\dim (\Ker \alpha) + \dim P = b_k$, so $\dim P = \dim( \Ker \alpha)$, which implies that $\Ker \alpha = P$. Thus the inner product space $H_k(M)$ is \emph{split} (\cite{MH}, Chapter I, \textsection 6, p.12). It follows that the preceding argument applied to the rational homology of a compact oriented manifold $M$ of dimension $n = 4l$ gives Thom's theorem that the signature (index) is a cobordism invariant (\cite{T}, Corollaire V.11, p.176). (See also \cite{H}, Theorem 8.2.1 III, p.85.)
\end{rem}


Now we adapt this proof to real intersection homology. Consider the variety $X= L_\epsilon(V, v_0)$, the $\epsilon$-link of a point $v_0$ in a real algebraic subvariety $V$ of euclidean space: $L_\epsilon(V, v_0) = V\cap S(v_0, \epsilon)$, with $S(v_0, \epsilon)$ the sphere of radius $\epsilon$ about $v_0$, for $\epsilon$ sufficiently small.  We assume that $V$ has pure dimension $n+1$, so that $X$ has pure dimension $n$.

Let $\mathcal S$ be an algebraic stratification of $V$, and let $\rho:V\to \R$, $\rho(v) = |v-v_0|^2$. By \cite{PP}, Theorem 9.3, there is a stratification $\mathcal T$ of $\R$ such that the restriction of $\rho$ to the preimage of each stratum $T\in\mathcal T$ is a locally trivial fibration. 
Thus there exists $\eta>0$ such that for every $\epsilon$ with $0<\epsilon<\eta$, there exists $\delta>0$ and a stratum preserving arc-analytic semialgebraic homeomorphism
$$
\varphi: (\epsilon - \delta, \epsilon +\delta)\times L_\epsilon(V,v_0) \to \rho^{-1}((\epsilon -\delta)^2, (\epsilon +\delta)^2),
$$
with $\rho\varphi(t,x) = t^2$ and $\varphi(\epsilon, x) = x$. It follows that $L_\epsilon(V, v_0)$ is independent of $\epsilon$ up to stratified arc-analytic semialgebraic homeomorphism (\emph{cf.} \cite{PP}, Cor. 9.6). 

\begin{rem}
Since $\varphi$ is stratum preserving, the stratification induced by $\mathcal S$ on the set $\rho^{-1}((\epsilon -\delta)^2, (\epsilon +\delta)^2)$  corresponds to the product stratification on $(\epsilon - \delta, \epsilon +\delta)\times L_\epsilon(V,v_0)$. In other words, for every stratum $S\in\mathcal S$,
$$
\varphi^{-1}(S\cap \rho^{-1}((\epsilon -\delta)^2, (\epsilon +\delta)^2))
= (\epsilon - \delta, \epsilon +\delta)\times(S\cap L_\epsilon(V,v_0)).
$$
\end{rem}

Given such a pair $(\epsilon,\delta)$, let $\epsilon'=\epsilon+\delta$. 
We replace $(W,M)$ in the proof of Theorem \ref{thom}
 with $(N,X)$, with $X = L_\epsilon(V, v_0)$ and  $N = V\cap B(v_0, \epsilon')$, the open $\epsilon'$-star of $v_0$ in $V$, where $B(v_0, \epsilon')$ is the open ball of radius $\epsilon'$ about $v_0$.
 
\begin{thm}\label{les} There is a long exact sequence
\begin{equation}\label{les-ih}
 \cdots \to IH_{i+1}^{cl}(N) \overset{\Delta}{\to} IH_i(X) \overset{\alpha}{\to} IH^{c}_i(N) \overset{\beta}{\to} IH_i^{cl}(N) \to \cdots ,
\end{equation}
and the intersection pairings on $X$ and $N$ give a homomorphism from this exact sequence to the dual exact sequence
\begin{equation}\label{les-ih*}
 \cdots \to IH_{j}^{cl}(N)^* \overset{\beta^*}{\to} IH_{j}^c(N)^* \overset{\alpha^*}{\to} IH_{j}(X)^* \overset{\Delta^*}{\to} IH_{j+1}^{cl}(N)^* \to \cdots .
\end{equation}
\end{thm}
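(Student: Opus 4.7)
My plan is to realize the open star $N$ as the interior of a compact semialgebraic space whose boundary at infinity is a copy of the link $X$, and then to obtain~(\ref{les-ih}) from the standard long exact sequence of a pair in real intersection homology after suitable identifications.

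\emph{Step 1: Compactification.} Fix $\epsilon'' \in (\epsilon, \epsilon+\delta)$ and set
\[
\overline N := V\cap\overline{B(v_0,\epsilon'')},\qquad X'' := V\cap S(v_0,\epsilon'').
\]
Then $\overline N$ is a compact semialgebraic set containing $X''$ as a closed subset, and the restriction of $\varphi$ to $(\epsilon''-\delta'',\,\epsilon'']\times X$ provides a stratified semialgebraic collar of $X''$ in $\overline N$. In particular, $X''$ is homeomorphic to $X$ via $\varphi|_{\{\epsilon''\}\times X}$, and the ``open end'' $\overline N \setminus X''$ is identified with a deformation retract of $N$.

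\emph{Step 2: Pair sequence and identifications.} I would apply the long exact sequence of the pair $(\overline N,X'')$ in real intersection homology and match each term with one in~(\ref{les-ih}): $IH_i(X'')\cong IH_i(X)$ by the collar homeomorphism; $IH_i(\overline N)\cong IH^c_i(N)$, since $\overline N$ deformation retracts onto a compact subset of $N$ by pushing its outer half-collar inward; and $IH_i(\overline N,X'')\cong IH^{cl}_i(N)$, the standard identification of closed-support intersection homology of an open set with the relative intersection homology of a collared compactification. Substituted into the pair LES, these identifications produce~(\ref{les-ih}). Sequence~(\ref{les-ih*}) is then its algebraic dual, automatically exact over the field $\Z/2$. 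The intersection pairings $\psi_X$ and $\psi_N$ supplied by~\cite{MPP} induce vertical homomorphisms $IH^{cl}_{i+1}(N)\to IH^c_{n-i}(N)^*$, $IH_i(X)\to IH_{n-i}(X)^*$, $IH^c_i(N)\to IH^{cl}_{n-i+1}(N)^*$, $IH^{cl}_i(N)\to IH^c_{n-i+1}(N)^*$, which assemble into a ladder from~(\ref{les-ih}) to~(\ref{les-ih*}) after the reindexing $j=n-i$.

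\emph{Main obstacle.} The substance of the theorem is the commutativity of this ladder, i.e.\ the naturality of $\psi$ with respect to each of $\alpha$, $\beta$, and $\Delta$. The two squares involving the inclusion $\alpha$ and the support-change map $\beta$ should follow directly from the chain-level construction of $\psi$ in~\cite{MPP}: a class intersected on a subspace agrees with its image intersected in the ambient space. The delicate square is the one containing the connecting map $\Delta$, where I expect to verify the Stokes-type identity
\[
\psi_X(\Delta a,\, c)\;=\;\psi_N\bigl(a,\,\alpha_*(c)\bigr)\qquad\bigl(a\in IH^{cl}_{i+1}(N),\;c\in IH_{n-i}(X)\bigr)
\]
by representing $\Delta a$ as the trace on $X''$ of a closed-support chain with boundary $a$, arranging $c$ to be transverse to that trace, and then applying the general position theorem of~\cite{MPP}. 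Making this clean in the arc-symmetric semialgebraic framework of~\cite{rih} is the main technical point of the proof.
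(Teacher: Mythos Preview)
Your outline rests on machinery that is not available in the framework of \cite{rih}. Real intersection homology there is defined only for semialgebraic \emph{open} subsets of a real algebraic variety; the closed set $\overline N$ and, a fortiori, the relative groups $IH_i(\overline N,X'')$ are not objects in this theory, so there is no ``long exact sequence of the pair $(\overline N,X'')$'' to invoke. Likewise, the only invariance statement at your disposal is invariance under arc-symmetric semialgebraic \emph{homeomorphisms}; a deformation retract pushing the collar inward is neither a homeomorphism nor arc-symmetric in general, so the identifications $IH_i(\overline N)\cong IH^c_i(N)$ and $IH_i(\overline N,X'')\cong IH^{cl}_i(N)$ have no justification. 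Making these identifications rigorous would amount to building relative real intersection homology with pair sequences and collar/excision properties---more work than proving the theorem directly.

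The paper accordingly works entirely at the chain level. The maps $\alpha$ and $\beta$ are inclusions of allowable chain complexes; $\Delta$ is defined by putting a closed-support cycle $C$ in general position with $X$ and taking $\partial(C\cap\{|v-v_0|\le\epsilon\})$. Exactness at each of the three spots is then checked by hand, using the stratified collar $\varphi$ to push chains in and out of $X$. Once the sequence is in place, the commutativity of all three ladder squares is a routine transversality count: one represents the relevant classes by cycles transverse to each other (and, for the middle square, to $X$), and the intersection numbers visibly agree.

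So your assessment of where the difficulty lies is inverted. The $\Delta$-square you single out as the ``main obstacle'' is in fact the easiest part once representatives are chosen transverse to $X$; the real content of the theorem is the construction and exactness of \eqref{les-ih}, which you tried to absorb into unavailable black boxes. (A minor point: the intersection pairing is constructed in \cite{rih}, not \cite{MPP}; the latter supplies only the underlying general position theorem.)
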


\begin{cor}
Suppose that $V$ is purely odd dimensional. If
the intersection pairings on the star $N$ and the link $X$ of $v_0\in V$ are nonsingular, then the intersection homology Euler characteristic $I_\chi(X)$ is even.
\end{cor}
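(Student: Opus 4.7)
The plan is to adapt Thom's argument (Theorem~\ref{thom}) to intersection homology, using the long exact sequence (\ref{les-ih}) in place of the long exact sequence of the pair $(W,M)$. Since $\dim V = n+1$ is odd, $n = 2k$ is even and the link $X$ has even dimension. Nonsingularity of the intersection pairing on $X$ gives isomorphisms $IH_i(X) \cong IH_{n-i}(X)^*$, so the Betti numbers $b_i := \dim IH_i(X)$ satisfy $b_i = b_{n-i}$. As in equation~(2.1), this yields $I_\chi(X) = 2\sum_{i<k}(-1)^i b_i + (-1)^k b_k$, so it suffices to prove that $b_k$ is even.

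I would then isolate the three-term segment of (\ref{les-ih}) containing $IH_k(X)$ and pair it, via Theorem~\ref{les}, with the corresponding segment of the dual sequence (\ref{les-ih*}). The hypothesis that the pairings on both $N$ and $X$ are nonsingular promotes the vertical maps of that comparison to isomorphisms: the middle column uses the pairing on $X$ (with $n-k = k$), and the two outer columns use the pairing on $N$ (with $(n+1)-k = k+1$). The outcome is a commutative diagram
$$
\begin{CD}
IH_{k+1}^{cl}(N) @>\Delta>> IH_k(X) @>\alpha>>  IH_k^c(N) \\
 @VV\cong V @VV\cong V  @VV\cong V \\
IH_{k}^c(N)^* @>\alpha^*>> IH_k(X)^* @>\Delta^*>> IH_{k+1}^{cl}(N)^*
\end{CD}
$$
formally identical to (\ref{ker-im}).

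From here the argument is purely formal and follows Thom's proof verbatim. Exactness of the rows identifies $\Ker\alpha$ with $\Image\alpha^*$, while adjointness of $\alpha$ and $\alpha^*$ with respect to the nonsingular pairing on $X$ identifies $\Ker\alpha$ with the annihilator of $\Image\alpha^*$, so $\dim\Ker\alpha + \dim\Image\alpha^* = b_k$. Combining these yields $b_k = 2\dim\Ker\alpha$, which is even. The only step requiring anything beyond Thom's classical argument is the construction of the commutative diagram above, that is, the compatibility of the intersection pairings with the connecting homomorphisms $\Delta$ and $\Delta^*$; but this compatibility is precisely the content of Theorem~\ref{les}, so no additional input is needed and the proof is a direct transcription of Thom's.
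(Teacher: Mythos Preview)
Your proposal is correct and follows essentially the same approach as the paper: you invoke Theorem~\ref{les} to obtain the commutative diagram analogous to~(\ref{ker-im}) with the vertical isomorphisms supplied by the nonsingularity hypotheses, and then repeat Thom's formal argument verbatim to conclude $b_k = 2\dim\Ker\alpha$. The only difference is that you spell out the reduction to the parity of $b_k$ and the dimension count in more detail than the paper, which simply says ``we repeat Thom's argument.''
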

For the corollary, note that if $\dim V = 2k+1$
we have a commutative diagram analogous to (\ref{ker-im}),
\begin{equation}
\begin{CD}
IH_{k+1}^{cl}(N) @>\Delta>> IH_k(X) @>\alpha>>  IH_k^c(N) \\
 @VV\cong V @VV\cong V  @VV\cong V \\
IH_{k}^c(N)^* @>\alpha^*>> IH_k(X)^* @>\Delta^*>> IH_{k+1}^{cl}(N)^* 
\end{CD}
\end{equation}
and we repeat Thom's argument  to conclude that $I\chi(X)$ is even.

\vskip.2in
To prepare for the proof of the theorem, recall that a semialgebraic $i$-chain is an equivalence class of closed semialgebraic sets of dimension $\leq i$, with respect to the relations $A+B \sim \operatorname{cl}((A\setminus B)\cup(B\setminus A))$ and $A\sim 0$ if $\dim A< i$. To simplify notation, we will identify a semialgebraic set with the chain it represents. The intersection homology groups are defined using allowable chains, which are represented by semialgebraic sets satisfying certain perversity conditions with respect to a good stratification (see \cite{rih}).

\begin{proof} First we define the sequence \eqref{les-ih}. Let $\mathcal S$ be a good stratification of $V$. The restriction of $\mathcal S$ to $N$ (resp.\ $X$) is a good stratification of $N$ (resp.\ $X$). If $C$ is an allowable $i$-chain of $X$ then $C$ is an allowable $i$-chain of $N$, and the homomorphism $\alpha:IH_i(X) \to IH^{c}_i(N)$ is induced by inclusion of chain groups. The map $\beta:IH^{c}_i(N) \to H_i^{cl}(N)$ is also induced by inclusion.

The  homomorphism $\Delta: IH_{i+1}^{cl}(N) \to IH_i(X)$ is defined as follows. 
If $c\in IH_{i+1}^{cl}(N)$, then $c$ is represented by an $(i+1)$-cycle $C$ such that $C$ is in general position with $X$. More precisely, for every stratum $S$ of $S|N$,
$$
\dim(C\cap X\cap S) \leq \dim(C\cap S) + \dim(X\cap S) - \dim S,
$$
and, since $S(v_0, \epsilon)$ is transverse to $S$, we therefore have
$$
\dim(C\cap X\cap S) \leq \dim(C\cap S) - 1 .
$$
Let $C_\epsilon = C\cap \{v\in V\ |\ |v-v_0|\leq \epsilon \}$. Then $\partial C_\epsilon$ is an allowable $(i-1)$-cycle of $X$ that represents $\Delta(c)$.

Next we show that the sequence \eqref{les-ih} is exact. Let $\pi:(\epsilon - \delta, \epsilon +\delta)\times X \to X$ be the projection.

(i) $\Image\Delta = \Ker\alpha$. Let $c\in IH_{i+1}^{cl}(N)$ be represented by an allowable cycle $C$ as above. Then $\Delta(c)$ is represented by $\partial C_\epsilon$. Now $\alpha(\Delta(c))$ is also represented by $\partial C_\epsilon$, and $C_\epsilon$ is compact, so $\alpha(\Delta(c))=0$. Thus $\Image\Delta \subset \Ker\alpha$. 
Conversely, suppose that $b\in IH_i(X)$ is represented by an allowable cycle $B$. If $\alpha(b)=0$, then by general position there is an allowable compact chain $C$ of $N$ such that $C$ is in general position with $X$ and $\partial C = B$. [See \cite{rih}, proof of Thm. 2.7. Suppose that $C_0$ is an allowable compact chain with $\partial C_0=B$. There is an arc-analytic, stratum-preserving semialgebraic homeomorphisms $\Phi_{t_0}:X\to X$ such that $\Phi_{t_0}(C_0)$ is in general position with $X$, and a $t_0$-deformation homology $C_1$ of $B$ such that $\partial C_1 = B + \Phi_{t_0}(B)$. Let $C = C_0+C_1$.] Let $D = C + \varphi_*([\epsilon,\epsilon')\times B)$. Then $D$ is an allowable cycle of $N$, in general position with $X$, and so $\partial(D_\epsilon)$ is in the image of $\Delta$. Let $D= D_\epsilon + D'_\epsilon$, where $D'_\epsilon$ is contained in $\varphi([\epsilon,\epsilon')\times X)$. Then $\partial(D_\epsilon) = \partial (D'_\epsilon)$. Let $D'_\epsilon = E + \varphi_*([\epsilon,\epsilon')\times B)$, so $\partial D'_\epsilon = \partial E + B$. Now $\partial E = \partial\varphi_*(\pi_*(\varphi_*^{-1}(E)))$, and $\varphi_*(\pi_*(\varphi_*^{-1}(E)))$ is a chain of $X$, so $\partial D'_\epsilon$ is homologous to $B$ in $X$. Thus $\Ker\alpha\subset \Image\Delta $.

(ii) $\Image\alpha = \Ker \beta$. Let $b\in IH_i(X)$ be represented by an allowable cycle $B$ in $X$. Then $\alpha(b)$ is represented by the same cycle $B$ in $N$, so $B$ also represents $\beta(\alpha(c))$. But $B = \partial \varphi_*([\epsilon, \epsilon')\times B)$, so $\beta(\alpha(b))=0$. Thus $\Image\alpha \subset \Ker \beta$.
Conversely, suppose that $c\in IH^c_i(N)$ is represented by a compact allowable cycle $C$. Choose $\delta' >0$ so that $\delta'<\delta$ and $C$ is contained in  $\{v \in V\ |\ |v-v_0|< \epsilon+\delta'\}$. If $\beta(c)= 0$ there is an allowable chain $D$ of $N$ such that $\partial D = C$ and $D$ is in general position with $L_{\epsilon+\delta'}(V, v_0)$. Let $C'= \partial(D\cap \{v\in V\ |\ |v-v_0|\geq \epsilon + \delta'\})$. Then 
$\partial(D\cap \{v\in V\ |\ |v-v_0|\leq \epsilon + \delta'\})= C+C'$, so $C$ is homologous to $C'$. Let $E= \varphi_*([\epsilon, \epsilon+\delta')\times \varphi_*^{-1}C')$. Then $\partial E = C' + C''$, with $C''$ contained in $X$. So $C$ is homologous to a cycle in $X$, and  we have $\Ker\beta\subset \Image\alpha$.

(iii) $\Image\beta = \Ker\Delta$. Let $c\in IH_i^c(N)$ be represented by a compact allowable cycle $C$ of $N$ such that $C$ is in general position with $X$. Then $\beta(c)$ is also represented by $C$, and so $\Delta(\beta(c))$ is represented by $\partial C_\epsilon$ as above. Now $\partial C_\epsilon = \partial \operatorname{cl} (C\setminus C_\epsilon) = \partial\varphi_*(\pi_*\varphi_*^{-1}(\operatorname{cl} (C\setminus C_\epsilon)))$. Thus $\Delta(\beta(c))=0$, and $\Image\beta \subset \Ker\Delta$.
Conversely, suppose $c\in IH^{cl}_{i+1}(X)$ is represented by an allowable cycle $C$ in general position with $X$, so $\Delta(c)$ is represented by the cycle $\partial C_\epsilon$. If $\Delta(c) = 0$ there is an allowable chain $B$ in $X$ with $\partial B = \partial C_\epsilon$. Thus the compact chain $B+C_\epsilon$ is an allowable cycle. We claim that $B+C_\epsilon$ is homologous to $C$, so $c\in\Image\beta$, and we have $\Ker\Delta \subset \Image\beta$. 

To prove the claim, consider the cycle $D= C + (B+C_\epsilon) = B + \operatorname{cl}(C\setminus C_\epsilon)$, and the chain $E$ in $(\epsilon-\delta,\epsilon +\delta)\times(\epsilon-\delta,\epsilon +\delta)\times X$ given by 
$E = \{(s,t,x)\ |\ \varphi(t,x)\in D),\ s\geq t \} $.
If $p:(\epsilon-\delta,\epsilon +\delta)\times(\epsilon-\delta,\epsilon +\delta)\times X
\to N$, $p(s,t,x) = \varphi(s,x)$, then the restriction $p|\{(s,t,x)\ |\ s\geq t\}$ is proper, and $\partial p_*(E) = D$.

Finally we show that the maps from \eqref{les-ih} to \eqref{les-ih*}  given by the intersection pairings form a commutative ladder ($i+j=n$):
\begin{equation}\label{ladder}
\begin{CD}
IH_{i+1}^{cl}(N) @>\Delta>> IH_i(X) @>\alpha>>  IH_i^c(N)@>\beta>>  IH_i^{cl}(N) \\
 @VVV @VVV  @VVV@VVV\\
IH_{j}(N)^* @>\alpha^*>> IH_j(X)^* @>\Delta^*>> IH_{j+1}^{cl}(N)^* @>\beta^*>>  IH_{j+1}^c(N)^*
\end{CD}
\end{equation}

(i) The right-hand square of \eqref{ladder} commutes if for all $a\in IH^c_i(N)$ and $b\in IH^c_{j+1}(N)$, we have $a\cdot \beta(b) = \beta(a) \cdot b$. Suppose that $a$ is represented by a compact allowable cycle $A$, and $b$ is represented by a compact allowable cycle $B$, with $A$ (stratified) transverse to $B$. (See \cite{rih}. This means that $A$ has a semialgebraic stratification $\mathcal A$, and $B$ has a semialgebraic stratification $\mathcal B$, such that $A$ and $B$ are substratified objects of the good stratification $\mathcal S|N$, and for all strata $S\in\mathcal A$, $T\in\mathcal B$, with $S$ and $T$ contained in a stratum $U\in\mathcal S|N$, we have that $S$ and $T$ are transverse in $U$.) Then $A$ and $B$ intersect in a finite number $m$ of points in top strata of $\mathcal S|N$. Now $A$ also represents $\beta(a)$, and $B$ represents $\beta(b)$. Thus $a\cdot \beta(b) = m = \beta(a) \cdot b$.

(ii) The middle square of \eqref{ladder} commutes if for all $a\in IH_i(X)$ and $b\in IH_{j+1}^{cl}(N)$, we have $a\cdot \Delta(b) = \alpha(a) \cdot b$. Suppose that $a$ is represented by an allowable cycle $A$ of $X$, and $b$ is represented by an allowable cycle $B$ of $N$, with $A$ transverse to $B$ in $N$. We may assume also that $B$ is transverse to $X$ in $N$, which implies that $B\cap X$ represents $\Delta(b)$ and $B\cap X$ is transverse to $A$ in $X$. Let $m$ be the number of points of $A\cap B$. Then  $a\cdot \Delta(b) = m = \alpha(a) \cdot b$.

(iii) The left-hand square of \eqref{ladder} commutes if for all $a\in IH_{i+1}^{cl}(N)$ and $b\in IH_j(X)$, we have $a\cdot \alpha(b) = \Delta(a) \cdot b$. This is the same assertion as (ii), with $a$ and $b$ interchanged.
\end{proof}

\begin{thm} If the intersection pairing is nonsingular for the pure-dimensional algebraic variety $V$ and for the link of a point $v_0$ in $V$, then the intersection pairing on the open star of $v_0$ in $V$ is nonsingular. More precisely, if the intersection pairing is nonsingular for $V$ and for $L_\epsilon(V,v_0)$ for small $\epsilon>0$, then it is nonsingular for $N_{\epsilon+\delta}=V\cap B(v_0,\epsilon+\delta)$ for $\delta>0$ sufficiently small.
\end{thm}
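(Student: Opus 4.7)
The plan is a Mayer--Vietoris argument on the open cover $V = N \cup V^{\mathrm{out}}$, where $N := N_{\epsilon+\delta} = V \cap B(v_0,\epsilon+\delta)$ is the open star and $V^{\mathrm{out}} := V \setminus \overline{B(v_0,\epsilon-\delta'')}$ for some small $\delta''>0$ chosen so that the overlap $N \cap V^{\mathrm{out}} = \rho^{-1}((\epsilon-\delta'')^2,(\epsilon+\delta)^2)$ is an arc-analytic semialgebraic trivial collar $(\epsilon-\delta'',\epsilon+\delta)\times X$ of the link (by the fibration result cited in Section~2). The idea is that the hypothesized nonsingularity of the pairings on $V$ and on $X$ will propagate through Mayer--Vietoris to force nonsingularity of the pairings on $N$ and on $V^{\mathrm{out}}$, via the five lemma.

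First I would construct Mayer--Vietoris long exact sequences for the cover $\{N, V^{\mathrm{out}}\}$ in both compact-support and closed-support intersection homology, using the same chain-level and general-position techniques as in the proof of Theorem~\ref{les}. By the K\"unneth identifications $IH^c_i(N \cap V^{\mathrm{out}}) \cong IH_i(X)$ and $IH^{cl}_i(N \cap V^{\mathrm{out}}) \cong IH_{i-1}(X)$ (since $IH^c_*(\R)$ is nonzero only in degree $0$ while $IH^{cl}_*(\R)$ is nonzero only in degree $1$), the two sequences take the form
$$
\cdots \to IH_i(X) \to IH^c_i(N) \oplus IH^c_i(V^{\mathrm{out}}) \to IH^c_i(V) \to IH_{i-1}(X) \to \cdots,
$$
$$
\cdots \to IH^{cl}_i(V) \to IH^{cl}_i(N) \oplus IH^{cl}_i(V^{\mathrm{out}}) \to IH_{i-1}(X) \to IH^{cl}_{i-1}(V) \to \cdots.
$$
Pairing the first sequence with the dual of the second via the intersection pairings on $V$, $X$, $N$, and $V^{\mathrm{out}}$ (the last two combined block-diagonally on the direct sums) produces a commutative ladder; commutativity of each square is a chain-level identity of exactly the type verified in parts~(i)--(iii) at the end of the proof of Theorem~\ref{les}, relying on general position with respect to $X$ and on the fact that a compact representative in $N$ meets a closed representative in $V$ only inside~$N$.

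Since the $V$-columns and the $X$-columns of this ladder are isomorphisms by hypothesis, the five lemma applied to a 5-term window centered on an $(N \oplus V^{\mathrm{out}})$-column forces
$$
IH^c_i(N) \oplus IH^c_i(V^{\mathrm{out}}) \to \bigl(IH^{cl}_{n+1-i}(N)\bigr)^* \oplus \bigl(IH^{cl}_{n+1-i}(V^{\mathrm{out}})\bigr)^*
$$
to be an isomorphism for every $i$, and by block-diagonality each summand is separately an isomorphism, which is the required nonsingularity of the pairing on $N$. The main obstacle will be the rigorous setup of the Mayer--Vietoris sequences and the K\"unneth identification of the collar in the semialgebraic intersection homology framework of \cite{rih}, together with the verification of ladder commutativity and block-diagonality; once these technical ingredients are in place, the five lemma and the block-diagonal structure deliver the conclusion.
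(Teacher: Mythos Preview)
Your proposal is correct and follows essentially the same approach as the paper: a Mayer--Vietoris argument on the open cover $\{N, N'\}$ (your $V^{\mathrm{out}}$ is the paper's $N'$), identification of the intersection homology of the collar overlap with that of the link $X$, a commutative ladder of intersection pairings, and the five lemma. The paper's sketch is terser---it does not spell out the K\"unneth identification or the block-diagonality step---but the strategy is identical.
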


\begin{rem}
Presumably the $\eta$-star $N_\eta = V\cap B(v_0,\eta)$ is independent of $\eta$ for small $\eta>0$, up to stratified arc-analytic semialgebraic homeomorphism. We do not need this fact.
\end{rem}

\begin{proof} We sketch the proof; the details are similar to the proof of Theorem \ref{les}.
Choose $\epsilon$ and $\delta$ as in the proof of Theorem \ref{les}. As before, let $N = V\cap B(v_0,\epsilon+\delta)$, and $X= L_\epsilon(V,v_0)$. Now let $N'= V\setminus\operatorname{cl}B(v_0, \epsilon -\delta)$. There is a Mayer-Vietoris sequence
\begin{equation}\label{mv-ihc}
 \cdots \to IH_{i+1}^{c}(V) \overset{\Delta}{\to} IH_i^c(N\cap N') \overset{\alpha}{\to} IH^{c}_i(N) \oplus IH^{c}_i(N') \overset{\beta}{\to} IH_i^{c}(V) \to \cdots ,
\end{equation}
such that the intersection pairings map \eqref{mv-ihc} to the dual of the Mayer-Vietoris sequence
\begin{equation}\label{mv-ihcl}
 \cdots \to IH_{j}^{cl}(V) \overset{\beta'}{\to} IH_{j}^{cl}(N)\oplus IH_{j}^{cl}(N') \overset{\alpha'}{\to} IH_{j}^{cl}(N\cap N') \overset{\Delta'}{\to} IH_{j-1}^{cl}(V) \to \cdots .
\end{equation}

Furthermore, for all $i, j$ with $i+j=\dim V-1$ there is a commutative diagram
\begin{equation}
\begin{CD}
IH_{i}^{c}(N\cap N') @>\cong>> IH_i(X)\\
 @VVV @VVV   \\
IH_{j+1}^{cl}(N\cap N')^* @>\cong>> IH_j(X)^* 
\end{CD}
\end{equation}
where the vertical arrows are given by the intersection pairings. So if the intersection pairing on the variety $X$ is nonsingular, so is the intersection pairing on the semialgebraic set $N\cap N'$.

Thus, if the intersection pairing is nonsingular on $V$ and $X$, then by the Five Lemma, the intersection pairings on $N$ and $N'$ are nonsingular.
\end{proof}


\section{A counterexample}

We show there is a compact real algebraic variety $X$ of pure dimension 2 such that $X$ is the link of a point in a real algebraic variety of dimension 3, and the real intersection homology euler characteristic $I\chi(X)$ is odd. 
The variety $X$ we seek will be homeomorphic to the topological space obtained from the real projective plane by identifying two points. 

Let $Z$ be a genus one complex projective curve with one node, considered as a real algebraic surface. Let $Y$ be obtained as the (real) blow-up of a nonsingular point of $Z$. 
The variety $Y$ has euler characteristic 0, and all links in $Y$ have euler characteristic 0, so all five Akbulut-King numbers of $Y$ are zero (\emph{cf.}\ \cite{MP}, p.80). Therefore there exists a variety $X$ homeomorphic to $Y$ such that $X$ is the link of a point in a 3-dimensional real algebraic variety. More precisely, by Theorem 7.1.2 of \cite{AK} applied to the suspension of $Y$, there exists such an algebraic variety $X$ whose singular stratification (\cite{AK}, p.31) has just three strata: $\{x_0\}$, $\{x_1\}$, and $X\setminus \{x_0,x_1\}$, where $x_0$ is the topological singular point. (On page 197 of \cite{AK} this argument is given for $Y$ the disjoint union of the projective plane and a point.)

We claim there exists a semialgebraic homeomorphism $h:X\to Y$. This follows from the Hauptvermutung for 2-dimensional simplicial complexes (\cite{B}, Thm.\ 4.6, p.190). [A semialgebraic triangulation of $X$ (resp.\ $Y$) is a semialgebraic homeomorphism from $X$ (resp.\ $Y$) to a finite euclidean simplicial complex $K$ (resp.\ $L$). If $X$ and $Y$ are homeomorphic then $K$ and $L$ are homeomorphic, so by the Hauptvermutung $K$ and $L$ have isomorphic simplicial subdivisions. So $K$ and $L$ are piecewise linearly homeomorphic, hence semialgebraically homeomorphic, so $X$ and $Y$ are semialgebraically homeomorphic.]

We will compute the real intersection homology groups of $X$ using the semialgebraic homeomorphism $h$. If $y_0\in Y$ is the topological singular point then $h(x_0) = y_0$. Let $y_1 = h(x_1)$. Consider the homomorphism $h_k:IH_k(X)\to H_k(Y)$, $k=0,1,2$, induced by the semialgebraic homeomorphism $h:X\to Y$. In other words $h_k$ is induced by the chain map that takes the allowable $k$-chain $C$ of $X$ to the $k$-chain $h(C)$ of $Y$. 

Let $\mathcal S$ be a good algebraic stratification (\cite{rih} \S 1) of the 2-dimensional variety $X$ such that $\mathcal S$ is a refinement of the singular stratification $\{ \{x_0\}, \{x_1\}, X\setminus \{x_0,x_1\}\}$ of $X$. An $\mathcal S$-allowable 0-chain of $X$ is a finite set of points contained in codimension 0 strata of $\mathcal S$. An $\mathcal S$-allowable 1-chain is a semialgebraic 1-chain $C$ such that $\dim(C\cap S)\leq 0$ and $\Sigma C\cap S= \emptyset$ for every codimension 1 stratum $S\in \mathcal S$, and $C\cap T = \emptyset$ for every codimension 2 stratum $T\in \mathcal S$. (Since $\partial C\subset \Sigma C$, it follows that $\partial C$ is allowable.) An $\mathcal S$-allowable 2-chain is a semialgebraic 2-chain $C$ such that $\dim(\Sigma C\cap S)\leq 0$ for every codimension 1 stratum $S\in \mathcal S$, and $\partial C$ is $\mathcal S$-allowable.

An allowable $0$-chain $\{p,q\}$ of $X$ bounds a semialgebraic 1-chain $C$ in $X\setminus \{x_0,x_1\}$, so by general position for nonsingular varieties (\cite{rih}, Prop.\ 3.2),  $\{p,q\}$ bounds an allowable 1-chain. Thus $h_0:IH_0(X)\to H_0(Y) = \Z/2$ is an isomorphism. The only semialgebraic 2-cycle of $X$ is the fundamental class of $X$, which is allowable. Thus  $h_2:IH_2(X)\to H_2(Y) = \Z/2$ is an isomorphism.

Now $H_1(Y)$ has dimension 2 with basis $\{a, b \}$, where $a$ is represented by the exceptional divisor of the blowup $\pi:Y\to Z$, and $\pi_*(b)\neq 0$. We claim that $h_1:IH_1(X)\to H_1(Y)$ is an isomorphism onto the span of $a$, so $IH_1(X) = \Z/2$.

First note that $h_1$ is injective. If $C$ an allowable 1-cycle in $X$, and $h(C)$ bounds a 2-chain $D$ of $Y$, then $C = \partial h^{-1}(D)$. But $h^{-1}(D)$ is an allowable 2-chain of $X$. 

Next we show that the image of $h_1$ is spanned by $a$. If $C$ is an allowable 1-cycle of $X$, then $C\cap \{x_0\} = \emptyset$. But for every 1-cycle $B$ of $Y$ representing $b$, we have $B\supset \{y_0\}$. Since $h^{-1}\{y_0\} = \{x_0\}$, we conclude that $b$ is not in the image of $h_1$. Let $A$ be a 1-cycle of $Y$ such that $A$ represents $a$ and $A\cap\{y_0,y_1\} = \emptyset$. Again by general position for the nonsingular variety $X\setminus \{x_0,x_1\}$, the cycle $C=h^{-1}(A)$ is homologous to an allowable 1-cycle $C'$, and so $h(C')$ is homologous to $a$. Thus $a$ is in the image of $h_1$.  

We have shown that $IH_0(X) = \Z/2$, $IH_1(X) = \Z/2$, and $IH_2(X) = \Z/2$, so the intersection homology euler characteristic $I\chi(X) = 1$.



\begin{thebibliography}{99} 

\bibitem{AK} S. Akbulut, H. King, Topology of Real Algebraic Sets, MSRI Pub.\ 25, Springer-Verlag, New York 1992.
±
\bibitem{B} Edward M.\ Brown, \emph{The Hauptvermutung for 3-complexes}, Trans. Am. Math. Soc. 144 (1969), 173--196.

\bibitem{GM} M. Goresky, R. MacPherson, \emph{Problems and bibliography in intersection homology}, Intersection Cohomology, A. Borel \emph{et al.} eds., Birkh\"auser, Boston 1994, 221-233.

\bibitem{H} F. Hirzebruch, Topological Methods in Algebraic Geometry, Springer-Verlag, New York 1966.

\bibitem{MP} C. McCrory, A. Parusi\' nski, \emph{Algebraically constructible functions: Real algebra and topology}, Arc Spaces and Additive Invariants in Real Algebraic and Analytic Geometry, M. Coste \emph{et al.} eds., Panoramas et Synth\` eses 24, Soc. Math. France 2007, 69--85.

\bibitem{rih} C. McCrory, A. Parusi\'nski, \emph{Real intersection homology}, Topology and its Appl. 234 (2018), 108--129.

\bibitem{MPP} C. McCrory, A. Parusi\'nski, L. P\u aunescu, \emph{Algebraic stratified general position and transversality}, J. Algebraic Geometry 28 (2019), 139--152.

\bibitem{MH} J. Milnor, D. Husemoller, Symmetric Bilinear Forms, Springer-Verlag, New York 1973.

\bibitem{PP} A. Parusi\'nski, L. P\u aunescu, \emph{Arc-wise analytic stratification, Whitney fibering conjecture and Zariski equisingularity}, Advances in Math. 309 (2017), 254--305.

\bibitem{T} R. Thom, \emph {Espaces fibr\' es en sph\` eres et carr\' es de Steenrod}, Ann. Sci. \'Ecole Norm. Sup. (3) 69 (1952), 109--182.

\end{thebibliography}
\end{document}